\newcommand{\Z}{\mathbf{Z}}
\newcommand{\Q}{\mathbf{Q}}
\newcommand{\Qb}{\overline{\Q}}
\newcommand{\C}{\mathbf{C}}
\newcommand{\PP}{\mathbf{P}}
\DeclareMathOperator{\dv}{div}
\DeclareMathOperator{\Gal}{Gal}
\DeclareMathOperator{\GL}{GL}
\DeclareMathOperator{\SL}{SL}
\newtheorem{thm}{Theorem}
\newtheorem{lem}[thm]{Lemma}
\newtheorem{pro}[thm]{Proposition}
\theoremstyle{definition}
\newtheorem{definition}[thm]{Definition}
\theoremstyle{remark}
\newtheorem{remarks}[thm]{Remarks}
\newtheorem{example}[thm]{Example}
\newtheorem{question}[thm]{Question}
\author{François Brunault}
\email{francois.brunault@ens-lyon.fr}
\address{ÉNS Lyon, UMPA, 46 allée d'Italie, 69007 Lyon, France}
\begin{document}

\renewcommand{\refname}{References}

\mainmatter

\begin{center}
\Large
\textbf{Parametrizing elliptic curves by modular units}
\normalsize
\end{center}

\vspace{.5cm}

\emph{Abstract.}
It is well-known that every elliptic curve over the rationals admits a parametrization by means of modular functions. In this short note, we show that only finitely many elliptic curves over $\Q$ can be parametrized by modular units. This answers a question raised by Zudilin in a recent work on Mahler measures. Further, we give the list of all elliptic curves $E$ of conductor up to $1000$ parametrized by modular units supported in the rational torsion subgroup of $E$. Finally, we raise several open questions.

\vspace{.5cm}

Since the work of Boyd \cite{boyd:expmath}, Deninger \cite{deninger:mahler} and others, it is known that there is a close relationship between Mahler measures of polynomials and special values of $L$-functions. Although this relationship is still largely open, some strategies have been identified in several instances. Specifically, let $P \in \Q[x,y]$ be a polynomial whose zero locus defines an elliptic curve $E$. If the polynomial $P$ is tempered, then the Mahler measure of $P$ can be expressed in terms of a regulator integral
\begin{equation}\label{reg integral}
\int_\gamma \log |x| d\arg(y)-\log |y| d\arg(x)
\end{equation}
where $\gamma$ is a (non necessarily closed) path on $E$ (see \cite{deninger:mahler,zudilin}). If the curve $E$ happens to have a parametrization by \emph{modular units} $x(\tau)$, $y(\tau)$, then we may change to the variable $\tau$ in (\ref{reg integral}) and try to compute the regulator integral using \cite[Thm 1]{zudilin}. In favourable cases, this leads to an identity between the Mahler measure of $P$ and $L(E,2)$: see for example \cite[\S 3]{zudilin} and the references therein. The following natural question, raised by Zudilin, thus arises:

\begin{center}
\emph{Which elliptic curves can be parametrized by modular units?}
\end{center}

We show in Section \ref{finiteness} that only finitely many elliptic curves over $\Q$ can be parametrized by modular units. The proof uses Watkins' lower bound on the modular degree of elliptic curves. Further, we give in Section \ref{preimages} the list of all elliptic curves $E$ of conductor up to $1000$ parametrized by modular units supported in the rational torsion subgroup of $E$. It turns out that there are 30 such elliptic curves. Finally, we raise in Section \ref{questions} several open questions.

\section{A finiteness result}\label{finiteness}

\begin{definition}\label{def param modunits}
Let $E/\Q$ be an elliptic curve of conductor $N$. We say that $E$ can be \emph{parametrized by modular units} if there exist two modular units $u,v \in \mathcal{O}(Y_1(N))^\times$ such that the function field $\Q(E)$ is isomorphic to $\Q(u,v)$.
\end{definition}

\begin{thm}\label{thm finiteness}
There are only finitely many elliptic curves over $\Q$ which can be parametrized by modular units.
\end{thm}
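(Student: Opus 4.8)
The plan is to recast the parametrization as an honest morphism of curves and then to confront an upper bound for its degree, coming from the cusps, with a lower bound coming from the modular degree.

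First I would turn Definition~\ref{def param modunits} into geometry. The isomorphism $\Q(E)\cong\Q(u,v)$ is an inclusion $\Q(E)\hookrightarrow\Q(X_1(N))$ of function fields, hence a finite morphism $\phi\colon X_1(N)\to E$ defined over $\Q$, and I may choose generators $x,y$ of $\Q(E)$ with $\phi^*x=u$ and $\phi^*y=v$. Write $d=\deg\phi$. The fact that $u$ and $v$ are modular units says precisely that $\dv(\phi^*x)$ and $\dv(\phi^*y)$ are supported on the cusps. Equivalently, if $S\subset E$ denotes the finite set of zeros and poles of $x$ and of $y$ (a set with at least two elements, since $x$ is non-constant), then $\phi^{-1}(S)$ consists of cusps, so $\phi$ restricts to a finite morphism $Y_1(N)\to E\setminus S$.

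Next I would run Riemann–Hurwitz on this open map. Since $E\setminus S$ is an affine curve with $\chi(E\setminus S)=-\#S\le -2$, while $\chi(Y_1(N))=2-2g-\nu$ with $g$ the genus and $\nu$ the number of cusps of $X_1(N)$, the ramification term is nonnegative and $\phi^{-1}(S)$ has at most $\nu$ points, so $d\cdot\#S\le 2g-2+\nu$, hence
\[
 d\ \le\ g-1+\tfrac12\nu\ =\ \tfrac1{12}\,[\SL_2(\Z):\pm\Gamma_1(N)]\ \ll\ N^2 ,
\]
the middle equality being the mass formula for $\Gamma_1(N)$ (there are no elliptic points for $N\ge 5$). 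As a structural check, note that by Manin–Drinfeld the cusps are torsion in $J_1(N)$, so after normalizing $\phi$ by a translation the points of $S$ are torsion points of $E$; thus $u,v$ are units with divisors supported on torsion, consistent with Definition~\ref{def param modunits}.

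For the opposite inequality I would exploit that $\phi$ is a \emph{modular} parametrization: it factors through the optimal quotient $E_f$ of $J_1(N)$ attached to the newform $f$ of $E$, so $d\ge\deg(X_1(N)\to E_f)$, a quantity governed by the Petersson norm $\langle f,f\rangle$. Passing through the degree-$\tfrac12\varphi(N)$ cover $X_1(N)\to X_0(N)$ and invoking Watkins' lower bound $m_E\gg_\varepsilon N^{7/6-\varepsilon}$ on the modular degree, I expect a bound of the shape $d\gg_\varepsilon N^{13/6-\varepsilon}$. Since $13/6>2$, this contradicts $d\ll N^2$ once $N$ is large, forcing $N$ to be bounded; as there are only finitely many elliptic curves over $\Q$ of bounded conductor, finiteness follows. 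The hard part will be this lower bound. The subtlety is that $E$ need not be the optimal curve in its isogeny class—indeed $X_1(11)$ realizes a non-optimal curve by a degree-$1$ map—so I must phrase everything in terms of $E_f$ and control the Manin constant and the period lattice of $E_f$, rather than those of $E$ itself, in order to convert Watkins' analytic estimate into a clean bound $d\gg N^{2+\delta}$. Making this conversion uniform in the isogeny class, and tracking the $\tfrac12\varphi(N)$ factor gained from the covering $X_1(N)\to X_0(N)$ against the relevant period ratio, is the crux on which the whole argument turns.
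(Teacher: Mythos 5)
Your upper bound is sound and is essentially the paper's: the paper bounds $\deg\varphi_1$ by the number of cusps plus the total ramification over the cusps and then invokes the genus formula for $X_1(N)$, which is the same computation as your Euler-characteristic argument applied to $Y_1(N)\to E\setminus S$ (yours is even sharper by a factor $2$, since you use $\#S\geq 2$ rather than a single pole fiber). Likewise, your reduction to the optimal quotient of $J_1(N)$ matches the paper's first step, which cites Stevens for the factorization $\varphi=\lambda\circ\varphi_1$ through the $X_1(N)$-optimal curve $E_1$.

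The gap is the lower bound, which you explicitly leave open (``I expect a bound of the shape $d\gg N^{13/6-\varepsilon}$'', ``the crux on which the whole argument turns''), and the tools you propose for it --- Petersson norms, Manin constants, period lattices --- are not what is needed: the paper's resolution is purely group-theoretic. From the commutative square relating $\pi:X_1(N)\to X_0(N)$, the optimal parametrization $\varphi_1:X_1(N)\to E_1$ and the strong Weil parametrization $\varphi_0:X_0(N)\to E_0$, one gets an isogeny $\lambda_0:E_1\to E_0$ with $\deg\varphi_1=\frac{\deg\pi\cdot\deg\varphi_0}{\deg\lambda_0}=\frac{\phi(N)\,\deg\varphi_0}{2\deg\lambda_0}$. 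If $\deg\lambda_0$ were allowed to grow with $N$, it would eat the factor $\phi(N)/2$, and Watkins alone would only give $\deg\varphi_1\gg N^{7/6-\varepsilon}$, which does \emph{not} contradict your $O(N^2)$ upper bound; so everything hinges on bounding $\deg\lambda_0$ by an absolute constant. The paper's key observation is this: each diamond operator $\langle\alpha\rangle$, $\alpha\in(\Z/N\Z)^\times/\pm 1$, descends through $\varphi_1$ to a translation $t_{A(\alpha)}$ by a rational torsion point $A(\alpha)\in E_1(\Q)_{\mathrm{tors}}$; the map $\alpha\mapsto A(\alpha)$ is a group homomorphism whose image is exactly $\ker(\lambda_0)$, because $X_0(N)$ is the quotient of $X_1(N)$ by the diamond operators. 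Hence $\ker(\lambda_0)\subset E_1(\Q)_{\mathrm{tors}}$, so $\deg\lambda_0\leq 16$ by Mazur's theorem, and therefore $\deg\varphi_1\gg\phi(N)\,N^{7/6-\varepsilon}\gg N^{13/6-\varepsilon'}$, beating the upper bound $\phi(N)\nu(N)/12\ll\phi(N)N^{1+\varepsilon}$ for large $N$. Without this step (or some other uniform bound on $\deg\lambda_0$), your argument does not close.
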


Let $E/\Q$ be an elliptic curve of conductor $N$. Assume that $E$ can be parametrized by two modular units $u,v$ on $Y_1(N)$. Then there is a finite morphism $\varphi : X_1(N) \to E$ and two rational functions $f,g \in \Q(E)^\times$ such that $\varphi^*(f)=u$ and $\varphi^*(g)=v$.

Let $E_1$ be the $X_1(N)$-optimal elliptic curve in the isogeny class of $E$, and let $\varphi_1 : X_1(N) \to E_1$ be an optimal parametrization. By \cite[Prop 1.4]{stevens}, there exists an isogeny $\lambda : E_1 \to E$ such that $\varphi = \lambda \circ \varphi_1$. Consider the functions $f_1=\lambda^*(f)$ and $g_1=\lambda^*(g)$. Note that $u=\varphi_1^*(f_1)$ and $v=\varphi_1^*(g_1)$. Theorem \ref{thm finiteness} is now a consequence of the following result.

\begin{thm}\label{thm 2}
If $N$ is sufficiently large, then $\varphi_1^*(\Q(E_1)) \cap \mathcal{O}(Y_1(N))=\Q$.
\end{thm}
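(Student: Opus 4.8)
The plan is to show that any modular unit pulled back from $E_1$ via the optimal parametrization $\varphi_1$ must actually be constant, once $N$ is large enough. The key tension to exploit is between two competing size constraints. On one hand, a modular unit on $Y_1(N)$ has its divisor supported entirely on the cusps of $X_1(N)$; on the other hand, if $f_1 \in \Q(E_1)^\times$ is a nonconstant rational function, then its divisor $\dv(f_1)$ is supported on finitely many points of $E_1$, and the divisor of the pullback $\varphi_1^*(f_1) = \sum_{P} \ord_P(f_1) \cdot \varphi_1^*(P)$ is supported on the fibers $\varphi_1^{-1}(P)$ over those points. For this pullback to be a modular unit, every point in every such fiber must be a cusp. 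The strategy is to argue that when $N$ is large this forces $f_1$ to be constant, because the cusps are too few and too constrained to support the preimage of any nonconstant function's divisor.

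First I would make the cusp-counting precise: the number of cusps on $X_1(N)$ grows like a fixed power of $N$ (roughly $\tfrac{1}{2}\sum_{d\mid N}\phi(d)\phi(N/d)$), whereas the degree of $\varphi_1$, namely the modular degree $\deg\varphi_1$, grows much faster. Here is where Watkins' lower bound enters, as advertised in the excerpt: it gives $\deg\varphi_1 \gg N^{\alpha}$ for some explicit exponent, growing strictly faster than the cusp count. Next I would translate the modular-unit condition into a statement about fibers. If $\varphi_1^*(f_1)$ is a unit, then $\varphi_1$ must map the (finite) set of cusps onto the support of $\dv(f_1)$ surjectively, and moreover $\varphi_1^{-1}(\mathrm{supp}\,\dv(f_1))$ must be contained in the set of cusps. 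The second containment is the crucial one: a nonconstant $f_1$ has at least two points in its divisor (a zero and a pole), so at least one fiber $\varphi_1^{-1}(P)$, of size roughly $\deg\varphi_1$, must consist entirely of cusps. Once $\deg\varphi_1$ exceeds the total number of cusps, this is impossible, so $f_1$ must be constant.

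The main obstacle I anticipate is handling ramification carefully, so that ``the fiber has size roughly $\deg\varphi_1$'' is rigorous rather than heuristic. A priori the morphism $\varphi_1$ could be wildly ramified over the points in $\mathrm{supp}\,\dv(f_1)$, collapsing a large-degree fiber down to very few geometric points, which would defeat a naive point count. To get around this I would count with an eye toward the divisor rather than the reduced fiber: by the projection formula, $\deg \dv(\varphi_1^*(f_1))_0 = \deg\varphi_1 \cdot \deg \dv(f_1)_0$, so the pullback of the zero divisor of $f_1$ has total degree $\deg\varphi_1 \cdot \deg(f_1)$, which is genuinely large; for this entire effective divisor to be supported on cusps, its degree cannot exceed the total ramification-weighted mass that the cusps can absorb, and I would bound that mass in terms of the number of cusps using the structure of the cusps of $X_1(N)$ and properties of the modular unit $u = \varphi_1^*(f_1)$ itself (whose cuspidal divisor has controlled degree). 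Balancing the genuine growth rate from Watkins against the polynomial cusp bound then yields the contradiction for all sufficiently large $N$, which is exactly the statement of Theorem \ref{thm 2}.
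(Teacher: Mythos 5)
Your outline has the same skeleton as the paper's proof (a pole of $f_1$ has its whole fiber inside the cusps; compare $\deg\varphi_1$ against what the cusps can absorb; get a contradiction from Watkins for large $N$), but there is a genuine gap at exactly the step you flag as the main obstacle: you never actually bound the ``ramification-weighted mass that the cusps can absorb,'' and the two tools you propose for it cannot work. Bounding it ``in terms of the number of cusps'' is hopeless, because the mass in question is $\sum_{Q \in C_1(N)} e_{\varphi_1}(Q)$ and a priori each $e_{\varphi_1}(Q)$ can be as large as $\deg\varphi_1$, so no function of $\#C_1(N)$ alone controls it; and invoking the modular unit $u=\varphi_1^*(f_1)$ ``whose cuspidal divisor has controlled degree'' is circular, since by your own projection formula that degree is exactly $\deg\varphi_1\cdot\deg \dv(f_1)_0$, which is the large quantity you are trying to contradict. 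The missing idea is Riemann--Hurwitz applied to $\varphi_1$: because $E_1$ has genus one, the \emph{total} ramification of $\varphi_1$ over the entire curve is $\sum_{Q}(e_{\varphi_1}(Q)-1)=2g_N-2$, hence
\begin{equation*}
\deg\varphi_1 \;=\; \sum_{Q \in \varphi_1^{-1}(P)} e_{\varphi_1}(Q) \;\leq\; \sum_{Q \in C_1(N)} e_{\varphi_1}(Q) \;\leq\; \#C_1(N) + 2g_N - 2,
\end{equation*}
and by the genus formula (no elliptic points for $N\geq 4$) the right-hand side equals $\frac{1}{12}[\SL_2(\Z):\Gamma_1(N)]=\phi(N)\nu(N)/12$.

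This repaired upper bound also breaks your endgame, because the true threshold is not the cusp count $O(N^{1+\varepsilon})$ but roughly $N^2$: the genus term $2g_N$ dwarfs the number of cusps. Watkins' theorem bounds $\deg\varphi_0$ for the strong Weil curve parametrized by $X_0(N)$, giving $\deg\varphi_0 \gg N^{7/6-\varepsilon}$, and since $7/6<2$ this does not beat $N^2$ if you simply transplant it to $\deg\varphi_1$ as your sketch does. The paper closes this gap with the commutative square relating $X_1(N)$, $X_0(N)$, $E_1$, $E_0$, which gives $\deg\varphi_1 = \deg\pi\cdot\deg\varphi_0/\deg\lambda_0$ with $\deg\pi=\phi(N)/2$, together with the bound $\deg\lambda_0 \leq \#E_1(\Q)_{\mathrm{tors}} \leq 16$ (Mazur), coming from the fact that $\ker(\lambda_0)$ is generated by rational torsion points $A(\alpha)$. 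This yields $\deg\varphi_1 \gg \phi(N)\,N^{7/6-\varepsilon} \gg N^{13/6-2\varepsilon}$, and $13/6>2$ wins only by the margin $N^{1/6}$. So both halves of your comparison need repair: the upper bound requires Riemann--Hurwitz plus the genus formula, and the lower bound requires the degree-$\phi(N)/2$ covering $\pi$ and the isogeny-degree bound, none of which appears in your outline.
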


\begin{proof}[Proof]
Let $C_1(N)$ be the set of cusps of $X_1(N)$. Let $f \in \Q(E_1) \backslash \Q$ be such that $\varphi_1^*(f) \in \mathcal{O}(Y_1(N))$. Let $P$ be a pole of $f$. Then $\varphi_1^{-1}(P)$ must be contained in $C_1(N)$, and we have
\begin{equation*}
\deg \varphi_1 = \sum_{Q \in \varphi_1^{-1}(P)} e_{\varphi_1}(Q) \leq \sum_{Q \in C_1(N)} e_{\varphi_1}(Q).
\end{equation*}
Let $g_N$ be the genus of $X_1(N)$. By the Riemann-Hurwitz formula for $\varphi_1$, we have
\begin{equation*}
2g_N-2 = \sum_{Q \in X_1(N)} (e_{\varphi_1}(Q)-1).
\end{equation*}
It follows that
\begin{align*}
\deg \varphi_1 & \leq \#C_1(N) + \sum_{Q \in C_1(N)} (e_{\varphi_1}(Q)-1)\\
& \leq \#C_1(N) + 2g_N-2.
\end{align*}
By the classical genus formula \cite[Prop 1.40]{shimura:book}, and since $X_1(N)$ has no elliptic points for $N \geq 4$, we have
\begin{equation*}
\#C_1(N)+2g_N-2 = \frac{1}{12} [\SL_2(\Z):\Gamma_1(N)] = \frac{\phi(N) \nu(N)}{12} \qquad (N \geq 4)
\end{equation*}
where $\phi(N)$ denotes Euler's function, and $\nu(N)$ is defined by
\begin{equation*}
\nu(N) = N \prod_{i=1}^k (1+\frac{1}{p_i}) \qquad \textrm{if } N = \prod_{i=1}^k p_i^{\alpha_i}.
\end{equation*}
We thus get
\begin{equation}\label{majoration deg phi1}
\deg \varphi_1 \leq \frac{\phi(N) \nu(N)}{12}.
\end{equation}

We are now going to show that (\ref{majoration deg phi1}) contradicts lower bounds of Watkins on the modular degree if $N$ is sufficiently large. Let $E_0$ be the strong Weil curve in the isogeny class of $E$. We have a commutative diagram
\begin{equation}\label{E1E0}
\begin{tikzcd}
X_1(N) \arrow{d}{\varphi_1} \arrow{r}{\pi} & X_0(N) \arrow{d}{\varphi_0}\\
E_1 \arrow{r}{\lambda_0} & E_0.
\end{tikzcd}
\end{equation}
We deduce that
\begin{equation*}
\deg \varphi_1 = \frac{\deg \pi \cdot \deg \varphi_0}{\deg \lambda_0}.
\end{equation*}
We have $\deg \pi = \frac{\phi(N)}{2}$. For every $\alpha \in (\Z/N\Z)^\times/\pm 1$, there exists a unique point $A(\alpha) \in E_1(\Q)_{\mathrm{tors}}$ such that $\varphi_1 \circ \langle \alpha \rangle = t_{A(\alpha)} \circ \varphi_1$, where $t_{A(\alpha)}$ denotes translation by $A(\alpha)$. The map $\alpha \mapsto A(\alpha)$ is a morphism of groups and its image is $\ker(\lambda_0)$. It follows that $\deg(\lambda_0) \leq \# E_1(\Q)_{\mathrm{tors}} \leq 16$. By \cite{watkins}, we have $\deg \varphi_0 \gg N^{7/6-\varepsilon}$ for any $\varepsilon>0$. It follows that $\deg \varphi_1 \gg \phi(N) N^{7/6-\varepsilon}$. Since $\nu(N) \ll N^{1+\varepsilon}$ for any $\varepsilon>0$, this contradicts (\ref{majoration deg phi1}) for $N$ sufficiently large.
\end{proof}

It would be interesting to determine the complete list of elliptic curves over $\Q$ parametrized by modular units. Unfortunately, the bound provided by Watkins' result, though effective, is too large to permit an exhaustive search.

\section{Preimages of torsion points under modular parametrizations}\label{preimages}

In order to find elliptic curves parametrized by modular units, we consider the following related problem. Let $E$ be an elliptic curve over $\Q$ of conductor $N$, and let $\varphi : X_1(N) \to E$ be a modular parametrization sending the $0$-cusp to $0$. By the Manin-Drinfeld theorem, the image by $\varphi$ of a cusp of $X_1(N)$ is a torsion point of $E$. Conversely, given a point $P \in E_{\mathrm{tors}}$, when does the preimage of $P$ under $\varphi$ consist only of cusps? The link between this question and parametrizations by modular units is given by the following easy lemma.

\begin{lem}\label{lem preimage}
Suppose that there exists a subset $S$ of $E(\Q)_{\mathrm{tors}}$ satisfying the following two conditions:
\begin{enumerate}
\item We have $\varphi^{-1}(S) \subset C_1(N)$.
\item There exist two functions $f,g$ on $E$ supported in $S$ such that $\Q(E)=\Q(f,g)$.
\end{enumerate}
Then $E$ can be parametrized by modular units.
\end{lem}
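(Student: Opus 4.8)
The plan is to take the pullbacks $u = \varphi^*(f)$ and $v = \varphi^*(g)$ as the desired modular units and to verify the two conditions of Definition~\ref{def param modunits}: that $u$ and $v$ lie in $\mathcal{O}(Y_1(N))^\times$, and that $\Q(u,v)$ is isomorphic to $\Q(E)$.

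First I would recall the characterization of modular units: a rational function on $X_1(N)$ defined over $\Q$ lies in $\mathcal{O}(Y_1(N))^\times$ precisely when it has neither zeros nor poles on the open curve $Y_1(N)$, that is, when its divisor is supported on the set of cusps $C_1(N)$. Since $\varphi$, $f$ and $g$ are all defined over $\Q$, the pullbacks $u$ and $v$ are defined over $\Q$ as well, so it only remains to control their divisors.

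The key step is a divisor computation. For the finite morphism $\varphi$ one has $\dv(\varphi^* f) = \varphi^*(\dv f)$, where the pullback of divisors sends a point $P$ to $\sum_{Q \in \varphi^{-1}(P)} e_\varphi(Q)\,[Q]$. Because $f$ is supported in $S$, the divisor $\dv(f)$ is supported in $S$, and hence $\dv(\varphi^* f)$ is supported in $\varphi^{-1}(S)$, which by condition (1) is contained in $C_1(N)$. Therefore $u = \varphi^* f$ has divisor supported on the cusps and is a modular unit; the identical argument applied to $g$ shows $v = \varphi^* g \in \mathcal{O}(Y_1(N))^\times$.

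It then remains to identify the function field. Since $\varphi$ is a nonconstant morphism of curves, $\varphi^*$ is an injective homomorphism of $\Q(E)$ into $\Q(X_1(N))$, so $\Q(u,v) = \Q(\varphi^* f, \varphi^* g) = \varphi^*(\Q(f,g))$. By condition (2) we have $\Q(f,g) = \Q(E)$, whence $\Q(u,v) = \varphi^*(\Q(E)) \cong \Q(E)$, and $E$ is parametrized by the modular units $u$ and $v$. I do not expect a serious obstacle here: the entire content is the functoriality of divisors under pullback, together with the observation that being supported on the cusps is exactly the defining property of a modular unit.
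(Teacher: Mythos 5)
Your proof is correct and follows exactly the paper's (very terse) argument: pull back $f$ and $g$ along $\varphi$, observe via the divisor computation $\dv(\varphi^*f)=\varphi^*(\dv f)$ and condition (1) that the pullbacks are supported on $C_1(N)$ and hence are modular units, and use injectivity of $\varphi^*$ together with condition (2) to identify $\Q(u,v)$ with $\Q(E)$. The paper compresses all of this into one sentence, so your write-up is simply a fully detailed version of the same proof.
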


\begin{proof}[Proof]
By condition (1), the functions $u=\varphi^*(f)$ and $v=\varphi^*(g)$ are modular units of level $N$, and by condition (2), we have $\Q(E) \cong \Q(u,v)$.
\end{proof}

We are therefore led to search for elliptic curves $E/\Q$ admitting sufficiently many torsion points $P$ such that $\varphi^{-1}(P) \subset C_1(N)$.

We first give an equivalent form of condition (2) in Lemma \ref{lem preimage}.

\begin{pro}\label{pro FS}
Let $S$ be a subset of $E(\Q)_{\textrm{tors}}$. Let $\mathcal{F}_S$ be the set of nonzero functions $f$ on $E$ which are supported in $S$. The following conditions are equivalent:
\begin{enumerate}
\item[(a)] \label{pro FS a} There exist two functions $f,g \in \mathcal{F}_S$ such that $\Q(E)=\Q(f,g)$.
\item[(b)] \label{pro FS b} The field $\Q(E)$ is generated by $\mathcal{F}_S$.
\item[(c)] \label{pro FS c} We have $\# S \geq 3$, and there exist two points $P,Q \in S$ such that $P-Q$ has order $\geq 3$.
\end{enumerate}
\end{pro}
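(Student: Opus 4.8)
The plan is to translate everything into divisor language via Abel's theorem and then to understand the subfield $\Q(\mathcal{F}_S)\subseteq\Q(E)$ geometrically. Fix the origin $O$ of $E$; a degree-$0$ divisor $D=\sum_P n_P[P]$ is principal if and only if $\sum_P n_P\,P=O$ in $E(\Qb)$. Hence $f\mapsto\dv(f)$ identifies $\mathcal{F}_S/\Q^\times$ with the group $G_S=\Ker\bigl(\Div^0_S(E)\to E\bigr)$ of degree-$0$ divisors supported on $S$ whose points sum to $O$, and $\deg f$ (the degree of $f\colon E\to\PP^1$) equals the number of poles of $f$. With this dictionary (a)$\Rightarrow$(b) is immediate, so the core of the proposition is to decide when $\Q(\mathcal{F}_S)=\Q(E)$; I would recover the sharper (a) at the end. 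Since translating by a point of $S$ is a $\Q$-automorphism of $E$ preserving all the data, I may assume $O\in S$ throughout.

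Write $\Q(\mathcal{F}_S)=\Q(C)$ for the corresponding curve $C$ with its finite morphism $\pi\colon E\to C$, and put $d=\deg\pi=[\Q(E):\Q(\mathcal{F}_S)]$, so that (b) means $d=1$. I would first rule out that $C$ has genus $1$: for any two distinct $P,Q\in S$ the function $f_{P,Q}$ with divisor $\ord(P-Q)\,([P]-[Q])$ lies in $\mathcal{F}_S$, yet its divisor is supported on two points with opposite nonzero coefficients and so cannot be a pullback along a nonconstant isogeny, on whose kernel-cosets every pulled-back divisor is constant. Thus $C=\PP^1$. Factoring each $f_{P,Q}=h\circ\pi$ then forces every point of $S$ to be a point of total ramification of $\pi$; Riemann--Hurwitz bounds the number of such points by $2d/(d-1)$, and the hypothesis $S\subseteq E(\Q)_{\mathrm{tors}}$ eliminates $d\geq3$. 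Indeed the total-ramification structure gives $G_S\subseteq d\,\Div^0_S(E)$, so the subgroup $T\subseteq E$ generated by the differences $P-Q$ surjects onto $(\Z/d)^{\#S-1}$; as $\#S\geq3$ this exhibits $(\Z/d)^2$ as a subquotient of $E(\Q)_{\mathrm{tors}}$, forcing $E[d]\subseteq E(\Q)$ and hence $\mu_d\subseteq\Q$ by the Weil pairing, so $d\leq2$.

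It remains to analyse $d=2$, where the real subtlety lies. Since $O\in S$ is then a ramification point, $\pi$ must be the quotient by $[-1]$, that is the $x$-map, whose ramification locus is $E[2]$; thus $d=2$ forces $S\subseteq E[2]$, i.e.\ (before translating) that every difference of points of $S$ has order $\leq2$. Conversely I would run the parity computation on a coset of $E[2]$: a function supported on $E[2]$ satisfies $f\circ[-1]=\pm f$, and in the odd case it must vanish or blow up at all three nonzero $2$-torsion points; hence as soon as $S$ omits one of them, every $f\in\mathcal{F}_S$ is even, i.e.\ a rational function of $x$, and $\Q(\mathcal{F}_S)\subseteq\Q(x)\subsetneq\Q(E)$. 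Combining the two paragraphs shows that (b) fails exactly when $\#S\leq2$ or $S$ lies in a coset of $E[2]$, which is the negation of (c)---the one genuinely borderline configuration being the full $2$-torsion coset, where an odd function (e.g.\ $y$ in a Weierstrass model) does generate $\Q(E)$ and has to be treated on its own.

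Finally I would deduce (a) from (b). Fixing a nonconstant $f\in\mathcal{F}_S$, there are only finitely many intermediate fields $\Q(f)\subseteq M\subsetneq\Q(E)$, and each proper $M$ meets $\mathcal{F}_S$ in a subgroup of $G_S$; the point is that, once (b) holds, the associated deck action is nontrivial on $G_S\otimes\Q$, so each such subgroup has infinite index, and a free abelian group of positive rank is not a finite union of infinite-index subgroups (its $\Q$-span is not a finite union of proper subspaces). A second generator $g\in\mathcal{F}_S$ avoiding all the $M$ then gives $\Q(E)=\Q(f,g)$. The step I expect to be the main obstacle is precisely the degree-two analysis above: exploiting the rationality of $S$ through the Weil pairing to kill every cover of degree $\geq3$, and carrying out the $2$-torsion parity dichotomy, including the delicate full-$2$-torsion case that pins down the exact shape of condition (c).
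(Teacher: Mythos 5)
Your reduction of (c) $\Rightarrow$ (b) to geometry of the subfield $\Q(\mathcal{F}_S)$ --- excluding a genus-one image by the kernel-coset argument, forcing total ramification along $S$, and killing every degree $d\geq 3$ by Riemann--Hurwitz plus the Weil pairing --- is correct and genuinely different from the paper, which never studies $\Q(\mathcal{F}_S)$ itself but instead proves (c) $\Rightarrow$ (a) directly via Lemma~\ref{lem FS} and an explicit choice of generators. The first genuine problem is your treatment of the full $2$-torsion coset, which is self-contradictory: you assert that ``(b) fails exactly when $\#S\leq 2$ or $S$ lies in a coset of $E[2]$, which is the negation of (c)'', while conceding in the same sentence that on a full coset of $E[2]$ the odd function $y$ lies in $\mathcal{F}_S$ and (together with $x$) generates $\Q(E)$, i.e.\ (b) \emph{holds} there. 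Both cannot stand, and ``has to be treated on its own'' is not a treatment. The concession is the true statement: if $E[2]\subseteq E(\Q)$ and $S=E[2]$, say $E: y^2=x(x-2)(x+2)$ (curve $64a1$), then $\dv(y)=((0,0))+((2,0))+((-2,0))-3(0)$ is supported in $S$, so $x,y\in\mathcal{F}_S$ and (a), (b) hold while (c) fails. So what you have actually found is a counterexample to Proposition~\ref{pro FS} as stated; no proof of the unamended equivalence can exist, and (c) must be enlarged to allow full cosets of $E[2]$. (For what it is worth, the paper's own proof makes exactly this slip: its argument for (b) $\Rightarrow$ (c) claims $S\subset E[2]$ implies $\mathcal{F}_S\subset\Q(x)$, forgetting odd functions, and the same oversight recurs in Example~\ref{ex64}, where $y$ is missing from the claimed generators of $\mathcal{F}_{S'_E}/\Q^\times$.) A correct write-up must either flag the counterexample or repair (c); yours instead asserts the equivalence.

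The second gap is the step (b) $\Rightarrow$ (a). You need every proper intermediate field $\Q(f)\subseteq M\subsetneq\Q(E)$ to meet $\mathcal{F}_S$ in an \emph{infinite-index} subgroup of $G_S$, and you justify this only by the phrase ``the associated deck action is nontrivial on $G_S\otimes\Q$''. That claim is false in general. Take $E: y^2+y=x^3$ (curve $27a3$, which appears in the paper's table) and $S=\{0,P,-P\}$ with $P=(0,0)$ of order $3$, so that (b) and (c) hold, and fix $f=y$, whose divisor is $3(P)-3(0)$. Since $[\Q(E):\Q(y)]=3$ is prime, the only proper intermediate field is $M=\Q(y)$; but $y+1\in M$ has divisor $3(-P)-3(0)$, so $(M^\times\cap\mathcal{F}_S)/\Q^\times$ contains $\langle y,\,y+1\rangle$ and has rank $2$, hence \emph{finite} index (namely $3$) in $G_S$. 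With finite-index subgroups your covering argument genuinely breaks down, because a finitely generated abelian group can perfectly well be covered by finitely many proper subgroups of finite index ($\Z^2$ is the union of its three subgroups of index $2$); the lemma you invoke applies only to infinite-index subgroups. In this example the conclusion survives because one may take $g=x$, but nothing in your argument selects such a $g$: making the step work requires choosing $f$ judiciously and controlling the intermediate fields above $\Q(f)$, which is precisely the content of the paper's Lemma~\ref{lem FS}. As written, your deduction of (a) from (b) is unproved.
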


In order to prove Proposition \ref{pro FS}, we show the following lemma.

\begin{lem}\label{lem FS}
Let $P \in E(\Q)_{\textrm{tors}}$ be a point of order $n \geq 2$. Let $f_P$ be a function on $E$ such that $\dv(f_P)=n(P)-n(0)$. Then the extension $\Q(E)/\Q(f_P)$ has no intermediate subfields. Moreover, if $P,P' \in E(\Q)_{\textrm{tors}}$ are points of order $n \geq 4$ such that $\Q(f_P)=\Q(f_{P'})$, then $P=P'$.
\end{lem}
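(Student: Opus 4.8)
The plan is to reformulate everything through the degree-$n$ morphism $f_P : E \to \PP^1$ attached to the divisor $\dv(f_P) = n(P)-n(0)$. This map is totally ramified over $0$ and over $\infty$, with fibres $f_P^{-1}(0)=\{P\}$ and $f_P^{-1}(\infty)=\{0\}$, each point occurring with multiplicity $n$; in particular $[\Q(E):\Q(f_P)] = \deg f_P = n$. Both assertions then become statements about the ramification of $f_P$.

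For the first assertion I would use the correspondence between intermediate fields $\Q(f_P) \subseteq K \subseteq \Q(E)$ and factorizations $E \xrightarrow{a} C \xrightarrow{b} \PP^1$ of $f_P$ through a smooth curve $C/\Q$, with $\deg a = d$, $\deg b = e$ and $de=n$. A proper intermediate field has $d>1$ and $e>1$, hence $1<d<n$. Since $f_P^{-1}(\infty)=\{0\}$, the fibre $b^{-1}(\infty)$ must be a single point $c_\infty$ whose only preimage under $a$ is $0$; comparing ramification indices, $e_{f_P}(0)=e_a(0)\,e_b(c_\infty)=n=de$ forces $e_a(0)=d$. The same argument over $0$ gives $a^{-1}(c_0)=\{P\}$ with $e_a(P)=d$. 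Thus $a$ is totally ramified at $0$ and $P$, so Riemann--Hurwitz, $0=d(2g_C-2)+\sum_Q(e_a(Q)-1)$ with $\sum_Q(e_a(Q)-1)\geq 2(d-1)>0$, forces $g_C=0$. As $a(0)\in C(\Q)$, the curve $C$ is isomorphic to $\PP^1$, and pulling back a coordinate $t$ with $\dv(t)=(c_0)-(c_\infty)$ produces a function $a^*t$ on $E$ with $\dv(a^*t)=d(P)-d(0)$. Since this divisor is principal, $d\,[(P)-(0)]=0$ in $E$, forcing $dP=0$; but $1<d<n=\ord(P)$, a contradiction. Hence no proper intermediate subfield exists.

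For the second assertion, the equality $\Q(f_P)=\Q(f_{P'})$ says that two generators of the same rational function field differ by a M\"obius transformation, so $f_{P'}=\mu\circ f_P$ for some $\mu\in\mathrm{PGL}_2(\Q)$. The point is that for $n\geq 4$ the points $0$ and $\infty$ are the only points of $\PP^1$ whose fibre under $f_P$ is a single point: Riemann--Hurwitz gives $\sum_Q(e_{f_P}(Q)-1)=2n$, of which $2n-2$ is already accounted for by $0$ and $P$, leaving only $2$, which is less than $n-1$, so no further point can be totally ramified. Having a one-point fibre is preserved by post-composition with $\mu$, and the same count applies to $f_{P'}$, so $\mu$ must permute $\{0,\infty\}$. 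If $\mu$ fixes $0$ and $\infty$, then $f_{P'}=\lambda f_P$ with $\lambda\in\Q^\times$, so $\dv(f_{P'})=\dv(f_P)$ and $P'=P$. If $\mu$ swaps them, then $\dv(f_{P'})=-\dv(f_P)=n(0)-n(P)$, incompatible with $\dv(f_{P'})=n(P')-n(0)$ unless $P=P'=0$, which is excluded since $\ord(P)=n\geq 4$.

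The main obstacle in both parts is the control of ramification, and it is also where the hypotheses enter. In the first part one must rule out a genuine intermediate cover of genus $0$: Riemann--Hurwitz alone only pins down the genus, and it is the divisor identity $\dv(a^*t)=d(P)-d(0)$ together with $\ord(P)=n$ that delivers the contradiction — precisely the place where $P$ has order exactly $n$, rather than merely a divisor of $n$, is used. In the second part the delicate point is that $\mu$ need not a priori respect the marked values $0$ and $\infty$; the assumption $n\geq 4$ is exactly what removes any stray totally ramified values for $\mu$ to move them onto.
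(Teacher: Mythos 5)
Your proof is correct, and it reorganizes the argument in two genuinely different ways. For the absence of intermediate fields, the paper splits according to the genus of the intermediate field $K$: in genus $1$ it uses rigidity of isogenies --- $f_P$ factors through an isogeny $\lambda$, and invariance of $\dv(f_P)=n(P)-n(0)$ under translation by $\ker\lambda$ forces $\ker\lambda=0$ --- while in genus $0$ it writes $f_P=g\circ h$, shows $g(t)=at^m$, and concludes from principality of $\dv(h)=\tfrac{n}{m}(P)-\tfrac{n}{m}(0)$ that $m=1$. You avoid the case split entirely: total ramification of $a$ at the two points $0$ and $P$ plus Riemann--Hurwitz forces any proper intermediate curve to have genus $0$, after which your principality argument (essentially the paper's genus-$0$ step, with $d=n/m$) gives the contradiction. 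Your version is uniform and never mentions isogenies; what the paper's version buys is that the genus-$1$ case is dispatched in one line by rigidity, at the cost of a case analysis. For the second assertion, the paper constrains the M\"obius transformation by poles alone: since $f_{P'}$ and $f_P$ both have their unique pole at $0$, the transformation must be affine, $f_{P'}=af_P+b$ (this step needs no hypothesis on $n$), and $n\geq4$ enters only at the end, when the three totally ramified points $P$, $P'$, $0$ of $f_P$ contradict Riemann--Hurwitz. You instead spend the hypothesis $n\geq 4$ up front to show that $0$ and $\infty$ are the only singleton-fibre values, deduce that $\mu$ permutes $\{0,\infty\}$, and finish by comparing divisors in the two cases. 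The two uses of Riemann--Hurwitz are the same count, $3(n-1)\leq 2n$ iff $n\leq 3$, packaged differently; both arguments are complete, and yours has the small aesthetic advantage of deriving $P=P'$ directly rather than by contradiction.
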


\begin{proof}[Proof]
Let $K$ be a field such that $\Q(f_P) \subset K \subset \Q(E)$. If $K$ has genus $1$, then $K$ is the function field of an elliptic curve $E'/\Q$ and $f_P$ factors through an isogeny $\lambda : E \to E'$. Then $\dv(f_P)$ must be invariant under translation by $\ker(\lambda)$. This obviously implies $\ker(\lambda)=0$, hence $K=\Q(E)$. If $K$ has genus $0$, then we have $K=\Q(h)$ for some function $h$ on $E$, and we may factor $f_P$ as $g \circ h$ with $g : \PP^1 \to \PP^1$. We may assume $h(P)=0$ and $h(0)=\infty$. Then $g^{-1}(0) = \{0\}$ and $g^{-1}(\infty)=\{\infty\}$, which implies $g(t)=at^m$ for some $a \in \Q^\times$ and $m \geq 1$. Thus $\dv(f)=m\dv(h)$. Since $\dv(h)$ must be a principal divisor, it follows that $m=1$ and $K=\Q(f_P)$.

Let $P, P' \in E(\Q)$ be points of order $n \geq 4$ such that $\Q(f_P)=\Q(f_{P'})$ and $P \neq P'$. Then $f_{P'} = (af_P+b)/(cf_P+d)$ for some $\begin{pmatrix} a & b \\ c & d \end{pmatrix} \in \GL_2(\Q)$. Considering the divisors of $f_P$ and $f_{P'}$, we must have $f_{P'}=af_P+b$ for some $a,b \in \Q^\times$. Then the ramification indices of $f_P : E \to \PP^1$ at $P$, $P'$, $0$ are equal to $n$, which contradicts the Riemann-Hurwitz formula for $f_P$.
\end{proof}

\begin{proof}[Proof of Proposition \ref{pro FS}]
It is clear that $(\mathrm{a})$ implies $(\mathrm{b})$. Let us show that $(\mathrm{b})$ implies $(\mathrm{c})$. If $\# S \leq 2$, then $\mathcal{F}_S/\Q^\times$ has rank at most $1$ and cannot generate $\Q(E)$. Assume that for every points $P,Q \in S$, we have $P-Q \in E[2]$. Translating $S$ if necessary, we may assume $0 \in S$. It follows that $S \subset E[2]$ and $\mathcal{F}_S \subset \Q(x) \subsetneq \Q(E)$.

Finally, let us assume $(\mathrm{c})$. Translating $S$ if necessary, we may assume $0 \in S$. Let us first assume that $S$ contains a point $P$ of order $2$. Then $\Q(f_P) = \Q(x)$ has index $2$ in $\Q(E)$ and is the fixed field with respect to the involution $\sigma : p \mapsto -p$ on $E$. By assumption, there exist two points $Q, R \in S$ such that $Q-R$ has order $n \geq 3$. Let $g$ be a function on $E$ such that $\dv(g)=n(Q)-n(R)$. Then it is easy to see that $\dv(g)$ is not invariant under $\sigma$. It follows that $g \not\in \Q(f_P)$ and $\Q(f_P,g)=\Q(E)$. Let us now assume that $S \cap E[2]=\{0\}$. By assumption, $S$ contains two distinct points $P,Q$ having order $\geq 3$. If $P$ or $Q$ has order $\geq 4$, then Lemma \ref{lem FS} implies that $\Q(f_P,f_Q)=\Q(E)$. If $P$ and $Q$ have order $3$, then we must have $Q=-P$ because $\Q(E[3])$ contains $\Q(\zeta_3)$. It follows that the function $g$ on $E$ defined by $\dv(g)=(P)+(-P)-2(0)$ has degree $2$, so we have $g \not\in \Q(f_P)$ and $\Q(f_P,g)=\Q(E)$.
\end{proof}

Let $E/\Q$ be an elliptic curve of conductor $N$. Fix a Néron differential $\omega_E$ on $E$, and let $f_E$ be the newform of weight $2$ and level $N$ associated to $E$. We define $\omega_{f_E}=2\pi i f_E(z)dz$. Let $\varphi_E : X_1(N) \to E$ be a modular parametrization of minimal degree. We have $\varphi_E^* \omega_E = c_E \omega_{f_E}$ for some integer $c_E \in \Z-\{0\}$ \cite[Thm 1.6]{stevens}, and we normalize $\varphi_E$ so that $c_E>0$. Conjecturally, we have $c_E=1$ \cite[Conj. I]{stevens}.

We now describe an algorithm to compute the set $S_E$ of points $P \in E(\Q)_{\mathrm{tors}}$ such that $\varphi_E^{-1}(P) \subset C_1(N)$. Let $P \in E(\Q)_{\mathrm{tors}}$. We define an integer $e_P$ by
\begin{equation*}
e_P = \sum_{\substack{x \in C_1(N) \\ \varphi_E(x)=P}} e_{\varphi_E}(x).
\end{equation*}
It is clear that $\varphi_E^{-1}(P) \subset C_1(N)$ if and only if $e_P=\deg \varphi_E$. Let $d$ be a divisor of $N$, and let $C_d$ be the set of cusps of $X_1(N)$ of denominator $d$ (that is, the set of cusps $\frac{a}{b}$ satisfying $(b,N)=d$). Every cusp $x \in C_d$ can be written (non uniquely) as $x=\langle \alpha \rangle \sigma(\frac1d)$ with $\alpha \in (\Z/N\Z)^\times/\pm 1$ and $\sigma \in \Gal(\Q(\zeta_d)/\Q)$. Since $e_{\varphi_E}(x)=e_{\varphi_1}(x)=e_{\varphi_1}(1/d)$, we get
\begin{equation*}
e_P = \sum_{d |N} e_{\varphi_1}(1/d) \cdot \# \{x \in C_d : \varphi_E(x)=P\}.
\end{equation*}
Recall that for each $\alpha \in (\Z/N\Z)^\times$, there exists a unique point $A(\alpha) \in E(\Q)_{\mathrm{tors}}$ such that $\varphi_E \circ \langle \alpha \rangle = t_{A(\alpha)} \circ \varphi_E$, where $t_{A(\alpha)}$ denotes translation by $A(\alpha)$. We let $A_E \subset E(\Q)_{\mathrm{tors}}$ be the image of the map $\alpha \mapsto A(\alpha)$. Note that the set $\{x \in C_d : \varphi_E(x)=P\}$ is empty unless $\varphi_E(1/d) \in P+A_E$, in which case we have $\varphi_E(C_d) =P+A_E$ and the number of cusps $x \in C_d$ such that $\varphi_E(x)=P$ is given by $\# C_d / \# A_E$. Thus we get
\begin{equation*}
e_P = \frac{1}{\# A_E} \sum_{\substack{d |N \\ \varphi_E(1/d) \in P+A_E}} e_{\varphi_1}(1/d) \cdot \# C_d.
\end{equation*}
Furthermore, let $\pi : X_1(N) \to X_0(N)$ and $\varphi_0 : X_0(N) \to E_0$ be the maps as in (\ref{E1E0}). The ramification index of $\pi$ at $\frac{1}{d}$ is equal to $(d,N/d)$. Thus $e_{\varphi_1}(1/d)=(d,N/d) \cdot e_{\varphi_0}(1/d)$. The quantity $e_{\varphi_0}(1/d)$ is equal to the order of vanishing of $\omega_{f_E}$ at the cusp $1/d$, and may be computed numerically (see \cite[\S 7]{brunault:ephi}). Moreover, the number of cusps of $X_0(N)$ of denominator $d$ is given by $\phi((d,N/d))$. It follows that $\# C_d = \phi((d,N/d)) \cdot \phi(N)/(2 (d,N/d))$ and we get
\begin{equation}\label{formula eP}
e_P = \frac{\phi(N)}{2\# A_E} \sum_{\substack{d |N \\ \varphi_E(1/d) \in P+A_E}} e_{\varphi_0}(1/d) \cdot \phi((d,N/d)).
\end{equation}
Finally, using notations from Section \ref{finiteness}, the modular degree of $E$ may be computed as
\begin{equation}\label{formula degphiE}
\deg \varphi_E = \frac{\phi(N)}{2} \cdot \frac{\operatorname{covol}(\Lambda_{E_0})}{\operatorname{covol}(\Lambda_E)} \cdot \deg \varphi_0
\end{equation}
where $\Lambda_{E_0}$ and $\Lambda_E$ denote the Néron lattices of $E_0$ and $E$. We read off the modular degree $\deg \varphi_0$ from Cremona's tables \cite[Table 5]{cremona:tables}. Formulas (\ref{formula eP}) and (\ref{formula degphiE}) lead to the following algorithm.

\begin{enumerate}
\item Compute generators $\alpha_1,\ldots,\alpha_r$ of $(\Z/N\Z)^\times$.
\item For each $j$, compute numerically $\int_{z_0}^{\langle \alpha_j \rangle z_0} \omega_{f_E}$ for $z_0=(-\alpha_j+i)/N$.
\item Deduce $A_j = A(\alpha_j) \in E(\Q)_{\mathrm{tors}}$.
\item Compute the subgroup $A_E$ generated by $A_1,\ldots,A_r$.
\item Compute the list $(P_1,\ldots,P_n)$ of all rational torsion points on $E$.
\item Initialize a list $(e_{P_1},\ldots,e_{P_n})=(0,\ldots,0)$.
\item For each $d$ dividing $N$, do the following:
\begin{enumerate}
\item Compute numerically $z_d = \int_0^{1/d} \omega_{f_E}$.
\item Check whether the point $Q_d = \varphi_E(1/d)$ is rational or not.
\item If $Q_d$ is rational, then do the following:
\begin{enumerate}
\item Compute numerically $e_{\varphi_0}(1/d)$.
\item For each $B \in A_E$, do $e_{Q_d+B} \leftarrow e_{Q_d+B} + e_{\varphi_0}(1/d) \phi((d,N/d))$.
\end{enumerate}
\end{enumerate}
\item Output $S_E = \{P \in E(\Q)_{\mathrm{tors}} : e_P = \# A_E \cdot \frac{\operatorname{covol}(\Lambda_{E_0})}{\operatorname{covol}(\Lambda_E)} \cdot \deg \varphi_0\}$.
\end{enumerate}

The following table gives all elliptic curves $E$ of conductor $\leq 1000$ such that $S_E$ satisfies condition $(\mathrm{c})$ of Proposition \ref{pro FS}. Computations were done using Pari/GP \cite{pari273} and the Modular Symbols package of Magma \cite{magma}.

\begin{table}[h!]
\begin{tabular}{c|c|c||}
$E$ & $E(\Q)_{\mathrm{tors}}$ & $S_E$ \\
\hline
$11a3$ & $\Z/5\Z$ & $E(\Q)_{\mathrm{tors}}$ \\
$14a1$ & $\Z/6\Z$ & $\{0, (9, 23), (1, -1), (2, -5)\}$  \\
$14a4$ & $\Z/6\Z$ & $E(\Q)_{\mathrm{tors}}$  \\
$14a6$ & $\Z/6\Z$ & $\{0, (2, -2), (2, -1)\}$  \\
$15a1$ & $\Z/4\Z \times \Z/2\Z$ & $\{0, (-2, 3), (-1, 0), (8, 18)\}$  \\
$15a3$ & $\Z/4\Z \times \Z/2\Z$ & $\{0, (0, 1), (1, -1), (0, -2)\}$  \\
$15a8$ & $\Z/4\Z$ & $E(\Q)_{\mathrm{tors}}$  \\
$17a4$ & $\Z/4\Z$ & $E(\Q)_{\mathrm{tors}}$  \\
$19a3$ & $\Z/3\Z$ & $E(\Q)_{\mathrm{tors}}$  \\
$20a1$ & $\Z/6\Z$ & $E(\Q)_{\mathrm{tors}}$  \\
$20a2$ & $\Z/6\Z$ & $E(\Q)_{\mathrm{tors}}$  \\
$21a1$ & $\Z/4\Z \times \Z/2\Z$ & $\{0, (-1, -1), (-2, 1), (5, 8)\}$  \\
$24a1$ & $\Z/4\Z \times \Z/2\Z$ & $E(\Q)_{\mathrm{tors}}$  \\
$24a3$ & $\Z/4\Z$ & $E(\Q)_{\mathrm{tors}}$  \\
$24a4$ & $\Z/4\Z$ & $E(\Q)_{\mathrm{tors}}$  \\
\end{tabular}
\begin{tabular}{c|c|c}
$E$ & $E(\Q)_{\mathrm{tors}}$ & $S_E$ \\
\hline
$26a3$ & $\Z/3\Z$ & $E(\Q)_{\mathrm{tors}}$  \\
$27a3$ & $\Z/3\Z$ & $E(\Q)_{\mathrm{tors}}$  \\
$27a4$ & $\Z/3\Z$ & $E(\Q)_{\mathrm{tors}}$ \\
$30a1$ & $\Z/6\Z$ & $\{0, (3, 4), (-1, 0), (0, -2)\}$  \\
$32a1$ & $\Z/4\Z$ & $E(\Q)_{\mathrm{tors}}$  \\
$32a4$ & $\Z/4\Z$ & $E(\Q)_{\mathrm{tors}}$  \\
$35a3$ & $\Z/3\Z$ & $E(\Q)_{\mathrm{tors}}$  \\
$36a1$ & $\Z/6\Z$ & $E(\Q)_{\mathrm{tors}}$  \\
$36a2$ & $\Z/6\Z$ & $E(\Q)_{\mathrm{tors}}$  \\
$40a3$ & $\Z/4\Z$ & $E(\Q)_{\mathrm{tors}}$  \\
$44a1$ & $\Z/3\Z$ & $E(\Q)_{\mathrm{tors}}$  \\
$54a3$ & $\Z/3\Z$ & $E(\Q)_{\mathrm{tors}}$  \\
$56a1$ & $\Z/4\Z$ & $E(\Q)_{\mathrm{tors}}$  \\
$92a1$ & $\Z/3\Z$ & $E(\Q)_{\mathrm{tors}}$  \\
$108a1$ & $\Z/3\Z$ & $E(\Q)_{\mathrm{tors}}$ 
\end{tabular}
\caption{Some elliptic curves parametrized by modular units}
\end{table}

\begin{remarks}
\begin{enumerate}
\item In order to compute the points $A_j$ in step (3) and $Q_d$ in step (7b), we implicitly make use of Stevens' conjecture that $c_E=1$. This conjecture is known for all elliptic curves of conductor $\leq 200$ \cite{stevens}.
\item Of course, steps (2), (7a) and (7ci) are done only once for each isogeny class.
\item If $x$ is a cusp of $X_1(N)$, then the order of $\varphi_E(x)$ is bounded by the exponent of the cuspidal subgroup of $J_1(N)$. Hence we may ascertain that $\varphi_E(x)$ is rational or not by a finite computation.
\item We compute $e_{\varphi_0}(\frac1d)$ by a numerical method. It would be better to use an exact method.
\end{enumerate}
\end{remarks}

\section{Further questions}\label{questions}

Note that in Lemma \ref{lem preimage}, we considered functions on $E$ which are supported in $E(\Q)_{\mathrm{tors}}$. In general, the image by $\varphi_E$ of a cusp of $X_1(N)$ is only rational over $\Q(\zeta_N)$, and we may use functions on $E$ supported in these non-rational points. In fact, let $S'_E$ denote the set of points $P \in E(\Q(\zeta_N))_{\mathrm{tors}}$ such that $\varphi_E^{-1}(P) \subset C_1(N)$. The set $S'_E$ is stable under the action of $\Gal(\Q(\zeta_N)/\Q)$. Then $E$ can be parametrized by modular units if \emph{and only if} there exist two functions $f,g \in \Q(E)^\times$ supported in $S'_E$ such that $\Q(E)=\Q(f,g)$. As the next example shows, this yields new elliptic curves parametrized by modular units.

\begin{example}\label{ex49}
Consider the elliptic curve $E=X_0(49)=49a1 : y^2+xy=x^3-x^2-2x-1$. The group $E(\Q)_{\mathrm{tors}}$ has order $2$ and is generated by the point $Q=(2,-1)$, which is none other than the cusp $\infty$ (recall that the cusp $0$ is the origin of $E$). The set $S'_E$ consists of all cusps of $X_0(49)$. Let $P$ be the cusp $\frac17$. It is defined over $\Q(\zeta_7)$ and its Galois conjugates are given by $\{P^\sigma\}_\sigma = \{P,3P+Q,-5P,-P+Q,-3P,5P+Q\}$. There exists a function $v \in \Q(E)$ of degree $7$ such that $\dv(v) = \sum (P^\sigma) + (Q) - 7(0)$. Since $x-2$ and $v$ have coprime degrees, the curve $E$ can be parametrized by the modular units $u=x-2$ and $v$.
\end{example}

\begin{example}\label{ex64}
Consider the elliptic curve $E=64a1 : y^2=x^3-4x$. Its rational torsion subgroup is given by $E(\Q)_{\mathrm{tors}} \cong \Z/2\Z \times \Z/2\Z$. There is a degree 2 morphism $\varphi_0 : X_0(64) \to E$, and we have $S_E = E(\Q)_{\mathrm{tors}}$. However, the image of the cusp $\frac18$ is given by $P=\varphi_0(\frac18) = (2i,-2\sqrt{2}+2i\sqrt{2})$. This point is defined over $\Q(\zeta_8)$ and we have $S'_E = S_E \cup \{P^\sigma\}_\sigma$. We can check that $\mathcal{F}_{S'_E}/\Q^\times$ is generated by $x$,  $x \pm 2$ and $x^2+4$, hence it cannot generate $\Q(E)$. However, if we base change to the field $\Q(\sqrt{2})$, then we find that the function $v=y-\sqrt{2} x+2\sqrt{2}$ is supported in $S'_E$ and has degree $3$. Hence $E/\Q(\sqrt{2})$ can be parametrized by the modular units $u=x$ and $v$.
\end{example}

Example \ref{ex64} suggests the following question : which elliptic curves $E/\Q$ of conductor $N$ can be parametrized by modular units \emph{defined over $\Q(\zeta_N)$}? Note that much of the argument in Section \ref{finiteness} is purely geometrical; however, we are crucially using the fact that the modular parametrization is defined over $\Q$.

Finally, here are several questions to which I don't know the answer.

\begin{question}
Let $E/\Q$ be an elliptic curve of condutor $N$. Assume $E$ can be parametrized by modular units of some level $N'$ (not necessarily equal to $N$). Then we have a non-constant morphism $X_1(N') \to E$ and $N$ must divide $N'$. Does it necessarily follow that $E$ admits a parametrization by modular units of level $N$? In other words, does it make a difference if we allow modular units of arbitrary level in Definition \ref{def param modunits}? Similarly, does it make a difference if we replace $Y_1(N)$ by $Y(N)$ or $Y(N')$ in Definition \ref{def param modunits}?
\end{question}

\begin{question}
Does it make a difference if we allow the function field of $E$ to be generated by more than two modular units in Definition \ref{def param modunits}?
\end{question}

\begin{question}
What about elliptic curves over $\C$? It is not hard to show that if $E/\C$ can be parametrized by modular functions, then $E$ must be defined over $\Qb$. In fact, by the proof of Serre's conjecture due to Khare and Wintenberger, it is known that the elliptic curves over $\Qb$ which can be parametrized by modular functions are precisely the $\Q$-curves \cite{ribet}. Which $\Q$-curves can be parametrized by modular units?
\end{question}

\begin{question}
It is conjectured in \cite{bggp} that only finitely many smooth projective curves over $\Q$ of given genus $g \geq 2$ can be parametrized by modular functions. Is it possible to prove, at least, that only finitely many smooth projective curves over $\Q$ of given genus $g \geq 2$ can be parametrized by modular units?
\end{question}

\begin{question}
According to \cite{bggp}, there are exactly 213 curves of genus 2 over $\Q$ which are new and modular, and they can be explicitly listed. Which of them can be parametrized by modular units?
\end{question}

\begin{question}
Let $u$ and $v$ be two multiplicatively independent modular units on $Y_1(N)$. Assume that $u$ and $v$ do not come from modular units of lower level. Can we find a lower bound for the genus of the function field generated by $u$ and $v$?
\end{question}

\bibliographystyle{smfplain}
\bibliography{references}

\end{document}